\newtheorem{remark}{Remark}[section]
\newtheorem{theorem}{Theorem}[section]
\newtheorem{lemma}{Lemma}[section]
\newtheorem{props}{Proposition}[section]
\newtheorem{corollary}{Corollary}[section]
\def\ps@pprintTitle{%
  \let\@oddhead\@empty
  \let\@evenhead\@empty
  \let\@oddfoot\@empty
  \let\@evenfoot\@oddfoot
}
\begin{document}

\begin{frontmatter}

%% Title, authors and addresses

%% use the tnoteref command within \title for footnotes;
%% use the tnotetext command for theassociated footnote;
%% use the fnref command within \author or \address for footnotes;
%% use the fntext command for theassociated footnote;
%% use the corref command within \author for corresponding author footnotes;
%% use the cortext command for theassociated footnote;
%% use the ead command for the email address,
%% and the form \ead[url] for the home page:
\title{Dynamic Programming Principle for Stochastic Control Problems driven by General L\'{e}vy Noise}
% \tnotetext[label1]{}
\author[label1]{Ben Goldys}
\author[label2]{Wei Wu\corref{cor1}}
\ead{wei.wu.0519@gmail.com}
% \ead[url]{home page}
% \fntext[label2]{}
\cortext[cor1]{Corresponding author}
\address[label1]{School of Mathematics and Statistics, The University of Sydney, NSW 2006 Australia}
\address[label2]{School of Mathematics and Statistics, UNSW Australia UNSW Sydney, NSW 2052 Australia}
% \fntext[label3]{}

% %% use optional labels to link authors explicitly to addresses:
% \author[label1,label2]{}\author[label2]{}

\begin{abstract}
%% Text of abstract
   We extend the proof of the dynamic programming principle (DPP) for standard stochastic optimal control problems driven by general L\'{e}vy noises. 
   Under appropriate assumptions, it is shown that the DPP still holds when the state process fails to have any moments at all.
\end{abstract}

\begin{keyword}
%% keywords here, in the form: keyword \sep keyword
Dynamic programming \sep L\'{e}vy noise \sep Stochastic control 
%% PACS codes here, in the form: \PACS code \sep code

%% MSC codes here, in the form: \MSC code \sep code
%% or \MSC[2008] code \sep code (2000 is the default)

\end{keyword}

\end{frontmatter}

%% \linenumbers

% main text
\section{Introduction}\label{intro}
The dynamic programming principle (DPP) is a well-known device in studying stochastic optimal control problems. For a standard control problem with finite 
horizon, it states that the value function for the control problem starting at time $s \in [0,T]$ from a position $X_{s} = x$ is given by the formula
\begin{equation}\label{eqn:dppjump1}
   V(s,x) = \sup_{u \in \mathcal{A}_{s}}\mathbb{E}\Big(\int_{s}^{\tau}f(r,X_{r},u_{r})dr + V(\tau,X_{\tau})\Big),
\end{equation}
where $\tau$ is some stopping time, $u$ is an admissible control process, $\mathcal{A}_{s}$ is a given admissible control set at time $s$, and $X$ is a 
controlled state process.  All terms will be defined in a more precise way later. \\

There are many ways to prove the DPP. When the underlying probability space is fixed in advance, we say a stochastic control problem is under a strong 
formulation. In this case, one may use the theory of piecewise constant controls to construct appropriate supermartingales and show that the DPP holds 
through properties of supermartingales (see \cite{Kry80} for the diffusion case, and \cite{Ish04} for the jump case). Alternatively, we can prove the DPP 
by partitioning the state space, provided the value function satisfies certain regularity conditions or using its semicontinuous envelope (see for example 
\cite{BouT11}). When a control problem is defined in the weak sense, that is the underlying probability space is taken to be part of the control, we can 
also apply this approach (see for example \cite{AzePW14,YonZ99}). Moreover, recently by interpreting controls in the weak sense, \cite{ElkT13} proved the 
DPP by using a probabilistic approach.  \\

To prove the DPP, in most cases, the state process is required to have finite second moments (see for example \cite{AzePW14,BouT11,Ish04,Kry80,YonZ99}). A 
stochastic control problem is often formulated with the state process assumed to follow a certain stochastic differential equation (SDE). For SDE driven by 
Brownian noise, with appropriate assumptions on the coefficients of SDE, it is well known that the existence of finite second moments is assured. However, 
this does not hold in general case when the SDE is driven by a more general L\'{e}vy type noise. For example, let us consider the following (controlled)
state process: 
\begin{eqnarray*}
  \left\{\begin{array}{ll} dX_{t} = b(t,X_{t-},u_{t})dt + \sigma(t,X_{t-},u_{t})dW_{t} 
                                    + \displaystyle\int_{0 < |\eta| < 1}\gamma(t,X_{t-},u_{t},\eta)\tilde{N}(dt,d\eta) \\
                           \ \ \ \ \ \ \ \ \ + \displaystyle\int_{|\eta| \geq 1}\gamma(t,X_{t-},u_{t},\eta)N(dt,d\eta)                \\
                            \ \ \ \ \ \ \ \ \ \ \ \ \ \ \ \ \ \ \ \ \ \ \  \ \ \ \ \ \ \ \ \ \ \ \ \ \ \ \ \ \ \ \ \  \ \ \ \ \ \ \ \ \ \ \        \\
                           X_{s} = x \in \mathbb{R}^{d},  \ \ \ \ \ \ \ \ \ \ \ \ \ \ \ \ \ \ \ \ \ \ \  \ \ \ \ \ \ \ \ \ \ \ \ \ \ \ \ \ \ \ \ \
                                                             \ \ \ \ \ \ \ \ \ \ \ \ 0 \leq s \leq t \leq T, \end{array}\right.
\end{eqnarray*}
where $W$ is a Brownian motion, $N$ is a Poisson random measure, and $\tilde{N}$ is the associated compensated Poisson random measure. In this case, we need 
further assumption on the measure $\nu$, for example 
\begin{eqnarray}\label{eqn:aspm}
  \displaystyle\int_{|\eta| \geq 1}|\eta|^{p}\nu(d\eta)dt  &  <  &  \infty, \ \textnormal{for some} \ p \geq 2,
\end{eqnarray}
to assure that there exists a finite second moment for the state process. This would restrict us to only a subclass of L\'{e}vy type noises. However, in order 
to study state processes with heavy tailed distributions, one needs to relax the moments assumption. \\

Z\u{a}linescu extended the proof of DPP to stable processes, which requires (\ref{eqn:aspm}) to hold for a certain $p > 0$. He proved the DPP in the context of 
a combined control and optimal stopping problem in which a $C^{2}$-approximation of the state process is introduced. In contrast, the recent work of \cite{ElkT13} 
formulates the stochastic control problems in terms of controlled martingale problems. Their proof assumes that (\ref{eqn:aspm}) holds for $p = 1$. In this work,
our main contribution is to extend the proof of DPP (under the strong formulation) by relaxing (\ref{eqn:aspm}) in which no finite moments assumption are imposed. 
To this end, we use an approximation of state process which is commonly used in construction of solution of SDEs (see for example Theorem 6.2.9 on p374 in \cite{App09} 
or pp354-355 in \cite{Kun04}). The idea behind this is to define a new state process by cutting off the jumps if they are too 'large'. Since 'large' jumps cause the 
failure of the existence of moments, by cutting of the 'large' jumps we retain the nice property of existence of moments. In contrast to the $C^{2}$-approximation
which considered by Z\u{a}linescu, the approximation which we consider simplifies the proof of the DPP. \\

The paper is organized in the following way. In section 2, we formulate our control problem and state the DPP. In section 3, we present an approximation of the 
state process as well as some auxiliary results. Finally, we prove the DPP in section 4.

\section{Problem Formulation}
We will work on the Wiener-Poisson space. Let us recall the construction of such a space given in \cite{BouT11,IshK06}. To this end, 
we first recall the definitions of Wiener and Poisson spaces. Fix a $T > 0$. Let $\Omega_{W} = C([0,T];\mathbb{R}^{d})$, and for 
$\omega_{1} \in \Omega_{W}$, set $W_{t}(\omega_{1}) := \omega_{1}(t)$. Define $\mathbb{F}^{W} := (\mathcal{F}^{W}_{t})_{t \geq 0}$ 
as the smallest filtration such that $W_{s}$ is measurable with respect to $\mathcal{F}^{W}_{t}$ for all $s \in [0,t]$. On 
$(\Omega_{W},\mathcal{F}^{W})$, let $\mathbb{P}_{W}$ be the probability measure such that $W$ is the $m$-dimensional standard Brownian 
motion, where $\mathcal{F}^{W} = \mathcal{F}^{W}_{T}$. Then, we obtain the Wiener space $(\Omega_{W},\mathcal{F}^{W},\mathbb{P}_{W})$.
Let $\mathbb{R}^{q}_{0} =  \mathbb{R}^{q}\backslash\{0\}$, and $\Omega_{N}$ be the set of integer-valued measures on $[0,T] \times 
\mathbb{R}^{q}_{0}$. For $\omega_{2} \in \Omega_{N}$, set $N(\omega_{2},I \times A) := \omega_{2}(I \times A)$, where 
$I \in \mathcal{B}([0,t])$, and $A \in \mathcal{B}(\mathbb{R}^{q}_{0})$. Define $\mathbb{F}^{N} := (\mathcal{F}^{N}_{t})_{t \geq 0}$ 
as the smallest filtration such that $N(\cdot, I \times A)$ is measurable with respect to $\mathcal{F}^{N}_{t}$ for all $I \in 
\mathcal{B}([0,t])$ and $A \in \mathcal{B}(\mathbb{R}^{q}_{0})$. On $(\Omega_{N}, \mathcal{F}^{N})$, let $\mathbb{P}_{N}$ be the 
probability measure such that $N$ is the Poisson random measure with intensity $\nu$, where $\mathcal{F}^{N} = \mathcal{F}^{N}_{T}$, 
and $\nu$ is the L\'{e}vy measure, i.e., it satisfies
\begin{eqnarray}
  \displaystyle\int_{\mathbb{R}^{q}_{0}} (|\eta|^{2} \wedge 1) \nu(d\eta) < \infty. \nonumber
\end{eqnarray}
Then, we obtain the Poisson space $(\Omega_{N},\mathcal{F}^{N},\mathbb{P}_{N})$. Now, consider the product space $\Omega = \Omega_{W} \times 
\Omega_{N}$. For $\omega = (\omega_{1},\omega_{2}) \in \Omega$, set $W_{t}(\omega) := W_{t}(\omega_{1})$, and $N(\omega,I \times A) := 
N(\omega_{2},I \times A)$.  Let $\mathbb{P} := \mathbb{P}_{W} \otimes \mathbb{P}_{N}$ be the probability measure on $(\Omega,\mathcal{F})$,
where $\mathcal{F}$ is the completion of $\mathcal{F}^{W} \otimes \mathcal{F}^{N}$. This then yields the Wiener-Poisson space 
$(\Omega,\mathcal{F},\mathbb{P})$. Without of loss generality, we may assume that this space is complete. On this space, we may associate
a filtration $(\mathcal{F}_{t})_{t \geq 0}$ which is the right-continuous completed revision of the filtration 
$(\mathcal{F}^{W}_{t} \otimes \mathcal{F}^{N}_{t})_{t \geq 0}$.  \\
 
Let $\mathcal{F}^{W,s}_{t}$ be the smallest $\sigma$-algebra such that $W_{r} - W_{s}$ is measurable with respect to $\mathcal{F}^{W,s}_{t}$ 
for all $r \in [s, t \vee s]$, and $\mathcal{F}^{N,s}_{t}$ be the smallest $\sigma$-algebra such that $N(\cdot,I_{2} \times A) - 
N(\cdot, I_{1} \times A)$ is measurable with respect to $\mathcal{F}^{N,s}_{t}$ for all $I_{1}, I_{2} \in \mathcal{B}([s,t \vee s])$, $A \in 
\mathcal{B}(\mathbb{R}^{q}_{0})$, where $I_{1} \subset I_{2}$. We define a commonly used filtration $(\mathcal{F}^{s}_{t})_{t \geq s}$ which
is the right-continuous completed revision of $(\mathcal{F}^{W,s}_{t} \otimes \mathcal{F}^{N,s}_{t})_{t \geq s}$ (see for example \cite{BouT11} 
for this filtration). For the sake of notations, from now on, we write $N(\omega, (0,t] \times A)$ as $N(t,A)$.  \\

Next, we consider the following control problem. Fix $s \in [0,T)$, the controlled state process $(X_{t})_{t \geq s}$ is assumed to follow 
the SDE:
\begin{eqnarray}\label{eqn:dynsdelarge}
    \left\{\begin{array}{ll} dX_{t} = b(t,X_{t-},u_{t})dt + \sigma(t,X_{t-},u_{t})dW_{t} 
                                                      + \displaystyle\int_{0 < |\eta| < 1}\gamma(t,X_{t-},u_{t},\eta)\tilde{N}(dt,d\eta) \\
                                    \ \ \ \ \ \ \ \ \ + \displaystyle\int_{|\eta| \geq 1}\gamma(t,X_{t-},u_{t},\eta)N(dt,d\eta)                                  \\
                            \ \ \ \ \ \ \ \ \ \ \ \ \ \ \ \ \ \ \ \ \ \ \  \ \ \ \ \ \ \ \ \ \ \ \ \ \ \ \ \ \ \ \ \  \ \ \ \ \ \ \ \ \ \ \                      \\
                           X_{s} = x \in \mathbb{R}^{d},  \ \ \ \ \ \ \ \ \ \ \ \ \ \ \ \ \ \ \ \ \ \ \  \ \ \ \ \ \ \ \ \ \ \ \ \ \ \ \ \ \ \ \ \
                                                             \ \ \ \ \ \ \ \ \  0 \leq s \leq t \leq T, \end{array}\right.
\end{eqnarray}
where $X_{t-}$ is the left limit of $X_{t}$, and $u: [0,T] \times\Omega \rightarrow \mathbb{R}^{\ell}$ is a predictable process which acts as a control. 
Moreover, $b: [0,T] \times \mathbb{R}^{d} \times \mathbb{R}^{\ell} \rightarrow  \mathbb{R}^{d}$ is a continuous function, $\sigma: [0,T] \times\mathbb{R}^{d}
\times\mathbb{R}^{\ell} \rightarrow \mathbb{R}^{d \times m}$ is a continuous function, $\gamma: [0,T]\times\mathbb{R}^{d}\times\mathbb{R}^{\ell}\times
\mathbb{R}^{q}_{0} \rightarrow \mathbb{R}^{d}$ is a Borel measurable function, and $\gamma$ is continuous in $(t,x,u)$ for every $\eta \in \mathbb{R}^{q}_{0}$. 
In addition, $\tilde{N}$ is the compensated Poisson random measure associate to $N$, i.e. $\tilde{N}(dt,d\eta) = N(dt,d\eta) - \nu(d\eta)dt$. \\
   
Fix a compact set $A \subset \mathbb{R}^{\ell}$. The set of admissible controls $(u_{t})_{t \in [0,T]}$ is denoted by $\mathcal{A}_{s}$, where
\begin{eqnarray*}
   \mathcal{A}_{s}   & = & \Big\{u: [0,T] \times \Omega \rightarrow A \ \textnormal{is predictable with respect to}  \ (\mathcal{F}^{s}_{t})_{t \geq 0}\Big\}. 
\end{eqnarray*}

In the rest of the paper, we shall make the following assumption. \\

\hypertarget{assump1}{\textbf{Assumption 1}.}  There exist constants $C>0$ and $C_{M} > 0$ such that for all $t \in [0,T]$, $u \in A$, $x_{1}, x_{2} \in 
                                               \mathbb{R}^{d}$, and $0 < |\eta| < M$, we have                                               
                                               \begin{eqnarray}
                                                  \big|\sigma(t,x_{1},u) - \sigma(t,x_{2},u)\big| + \big|b(t,x_{1},u) - b(t,x_{2},u)\big|  
                                                                                                           &  \leq  &  C|x_{1}-x_{2}|, \nonumber \\
                                                  \big|\gamma(t,x_{1},u,\eta)-\gamma(t,x_{2},u,\eta)\big|  &  \leq  &  C_{M}|\eta||x_{1}-x_{2}|, \nonumber \\
                                                  \big|\gamma(t, x,\eta,u)\big|                            &  \leq  &  C_{M}|\eta|(1+|x|). \nonumber 
                                               \end{eqnarray} 
Here and after, we will use $C$ to denote a generic constant and may differ from one line to the other. Subscripts may be added to $C$ to emphasize dependence
on particular parameters. \\

It is well known that under \hyperlink{assump1}{Assumption 1} and the compactness of $A$, there exists a constant $C>0$ such that for all $t \in [s,T]$, $u \in A$, 
and $x \in \mathbb{R}^{n}$, the coefficients $\sigma$ and $b$ satisfy 
\begin{eqnarray}
  \big|\sigma(t,x,u)\big| + \big|b(t,x,u)\big| &  \leq  & C(1+|x|).  \nonumber
\end{eqnarray}  
Moreover, there exists a unique c\`{a}dl\`{a}g and adapted solution of SDE (\ref{eqn:dynsdelarge}). To emphasize dependence on initial conditions and the control, 
we may write $X_{t}$ as $X^{u,s,x}_{t}$. \\

The revenue functional for a given $u \in \mathcal{A}_{s}$ is defined as 
\begin{equation}\label{eqn:genrf}
   V^{u}(s,x) =  \mathbb{E}\Big(\int_{s}^{T}f(t,X^{u,s,x}_{t},u_{t})dt + h(X^{u,s,x}_{T}) \Big),
\end{equation}
where $f: \mathbb{R} \times \mathbb{R}^{d} \times \mathbb{R}^{\ell} \rightarrow \mathbb{R}$ and $h: \mathbb{R}^{d} \rightarrow \mathbb{R}$ are continuous bounded 
functions. We will say that                                       
\begin{equation}\label{eqn:valuefunc}
  V(s,x) = \sup_{u \in \mathcal{A}_{s}} V^{u}(s,x)
\end{equation}
is the value function. If there exists a maximizer $u^{\ast}(s) : = u^{\ast} \in \mathcal{A}_{s}$, then
\begin{eqnarray*}
  V(s,x)  &   =  & V^{u^{\ast}}(s,x).
\end{eqnarray*}

For $s \in [0,T]$, let $\mathcal{T}_{[s,T]}$ be the set of stopping times in $[s,T]$ adapted to $(\mathcal{F}^{s}_{t})_{t \geq s}$. The DPP is then stated in the 
following Theorem.
\begin{theorem}\label{thm:bellmanjump2mr}
  \textbf{(Dynamic Programming Principle)}: For every $\tau \in \mathcal{T}_{[s,T]}$ and all $x \in \mathbb{R}^{d}$, 
  \begin{equation}\label{eqn:dpjump}
    V(s,x) = \sup_{u \in \mathcal{A}_{s}}\mathbb{E}\Big(\int_{s}^{\tau}f(r,X^{u,s,x}_{r},u_{r})dr + V(\tau,X^{u,s,x}_{\tau}) \Big).
  \end{equation}
\end{theorem}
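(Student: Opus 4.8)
The plan is to establish the two inequalities $V(s,x) \le \sup_{u}\mathbb{E}(\int_s^\tau f\,dr + V(\tau,X^{u,s,x}_\tau))$ and $V(s,x) \ge \sup_{u}\mathbb{E}(\int_s^\tau f\,dr + V(\tau,X^{u,s,x}_\tau))$ separately, working throughout with the truncated (cutoff) state process $X^{M}$ introduced in Section 3, for which finite moments are available, and then letting $M\to\infty$. The key structural facts I would lean on are: (i) the flow property $X^{u,s,x}_{t} = X^{u,\tau,X^{u,s,x}_\tau}_{t}$ for $t\ge\tau$, which follows from uniqueness of the SDE solution; (ii) the fact that on the Wiener–Poisson space, conditioning on $\mathcal{F}^{s}_{\tau}$ and using the independence of the Brownian and Poisson increments after $\tau$ allows one to write $V^{u}(s,x)$ as an iterated expectation whose inner term, for controls that are ``frozen'' appropriately after $\tau$, is exactly $V^{u(\omega,\cdot)}(\tau(\omega),X^{u,s,x}_\tau(\omega))$; and (iii) the approximation results of Section 3, namely that $X^{u,s,x,M}_t \to X^{u,s,x}_t$ in a suitable sense (e.g. in probability, uniformly on compacts) as $M\to\infty$, together with continuity and boundedness of $f,h$ so that $V^{u}$ and $V$ inherit continuity in the spatial variable, uniformly in $u$.

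For the inequality $V(s,x)\le \sup_u \mathbb{E}(\cdots)$: fix $u\in\mathcal{A}_s$ and an arbitrarily fine partition to reduce to the case where the control is piecewise constant in a way compatible with conditioning at $\tau$. Using the tower property with respect to $\mathcal{F}^{s}_{\tau}$, split $\int_s^T f\,dt + h(X_T) = \int_s^\tau f\,dt + \big(\int_\tau^T f\,dt + h(X_T)\big)$; by the flow property and a regular-conditional-probability / shift argument on the Wiener–Poisson space, the conditional expectation of the bracketed term given $\mathcal{F}^{s}_{\tau}$ equals $V^{\tilde u}(\tau,X^{u,s,x}_\tau)$ for some admissible $\tilde u\in\mathcal{A}_\tau$ (depending on $\omega$), hence is $\le V(\tau,X^{u,s,x}_\tau)$. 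Taking expectations and then the supremum over $u$ gives ``$\le$''. The measurable-selection subtlety here is mild because one only needs an upper bound.

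For the reverse inequality $V(s,x)\ge \sup_u \mathbb{E}(\cdots)$: the idea is to take, for each $\varepsilon>0$, an $\varepsilon$-optimal control $u^1$ on $[s,\tau]$ and, for (almost) every endpoint value $y=X^{u^1,s,x}_\tau$, an $\varepsilon$-optimal control $u^{2,y}\in\mathcal{A}_\tau$ for the problem started at $(\tau,y)$, and to concatenate them into a single admissible control $u = u^1\mathbf{1}_{[s,\tau)} + u^{2}\mathbf{1}_{[\tau,T]}$; then $V^{u}(s,x) \ge \mathbb{E}(\int_s^\tau f\,dr + V(\tau,X^{u^1,s,x}_\tau)) - 2\varepsilon$, and letting $\varepsilon\to 0$ finishes. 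The main obstacle — and the place where I expect to spend the most care — is making the concatenation step rigorous: one must select $y\mapsto u^{2,y}$ in a measurable way (a measurable-selection / approximation-by-countable-partition argument, using the continuity of $V$ in $y$ to pass from finitely many ``test'' starting points to all of them), and one must verify that the glued control is genuinely $(\mathcal{F}^{s}_t)$-predictable and that the associated truncated state process still satisfies the flow identity; here the cutoff approximation of Section 3 is essential, since it is what lets us invoke the moment-based estimates (uniform integrability, continuous dependence on initial data) needed to justify interchanging limits, and only at the very end do we remove the truncation by sending $M\to\infty$ and using that $f,h$ are bounded and continuous so that $V^{u,M}\to V^{u}$ and $V^M\to V$ locally uniformly.
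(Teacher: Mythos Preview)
Your overall strategy matches the paper's: work with the truncated process $X^M$, prove both inequalities at the $M$-level, and pass to the limit $M\to\infty$ using boundedness of $f,h$ together with Lemmas~\ref{lemma:pwconvergence} and~\ref{lemma:convalM}. The ``$\leq$'' direction is essentially as you describe and is handled in the paper via the first assertion of Lemma~\ref{lemma:eqn:markovstate}.

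In the ``$\geq$'' direction, however, there is one structural step you are glossing over. You propose to concatenate an $\varepsilon$-optimal $u^1$ on $[s,\tau]$ with controls $u^{2,y}\in\mathcal{A}_\tau$ directly at the random time $\tau$. But $\mathcal{A}_t$ is defined only for deterministic $t$, and making the selection of $u^{2,y}$ measurable jointly in $y$ and in $\tau(\omega)$ is delicate. The paper avoids this entirely: it first carries out the countable-partition / $\varepsilon$-optimal concatenation argument (exactly as you outline) for each \emph{deterministic} $t\in[s,T]$, obtaining $V^M(s,x)\ge \mathbb{E}\big(\int_s^t f\,dr + V^M(t,X^{u,s,x,M}_t)\big)$; it then shows that the process $\mathcal{G}(t):=\int_s^t f\,dr + V^M(t,X^{u,s,x,M}_t)$ is an $(\mathcal{F}_t)$-supermartingale and invokes Doob's optional sampling theorem to upgrade the inequality to arbitrary $\tau\in\mathcal{T}_{[s,T]}$. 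This supermartingale/optional-sampling step is the missing ingredient in your plan.

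A smaller but related point: the spatial continuity you need for the partition argument is that of $V^M$ and $V^{u,M}$ (Proposition~\ref{prop:vfljumpgen} and Corollary~\ref{corollary:unifvaluefunM}), not of $V$. The paper never proves continuity of $V$ directly; indeed, the whole purpose of the truncation is that the moment bounds (Lemma~\ref{lemma:estaproxi}) underpinning those continuity estimates are available only for $X^M$. So the partition/concatenation step should be executed entirely at the $M$-level, with the error controlled by $\epsilon\,C_{T,p,M}\,\mathbb{E}\big(1+|X^{u,s,x,M}_t|^p\big)$, and only afterwards is $M$ sent to infinity.
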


In order to prove the DPP, we need some preparations.

\section{Auxiliary Results}
In this subsection, we present an approximation of the state process. Let $\tau_{0} = s$, and for $k = 1, 2, ...$, let $\tau_{k}$ be the arrival 
time of $k$th jump of a compound Poisson process $(L_{t})_{t \geq 0}$ after $\tau_{0}$, where
\begin{eqnarray}
   L_{t} = \int_{|\eta| \geq 1} \eta N(t,d\eta). \nonumber
\end{eqnarray}
Then, it is easy to verify that the following lemma holds.
\begin{lemma}\label{lemma:pwconvergence}
   For $M \geq 1$, let $\tau_{M}$ be a stopping time such that 
   \begin{eqnarray}
      \tau_{M} = \inf\{t > s: \Delta L_{t} \in E_{M}\},    \nonumber 
   \end{eqnarray}
   where $\Delta L_{t} = L_{t} - L_{t-}$, and $E_{M} = \{\eta\in\mathbb{R}^{q}_{0}: |\eta| \geq M\}$. As $M \rightarrow \infty$, we have 
   $1_{\{\tau_{M} \leq T\}} \rightarrow 0$, $\mathbb{P}$-a.s. In particular, we have $1_{\{\tau_{M} \leq \tau\}} \rightarrow 0$ $\mathbb{P}$-a.s. 
   for every $\tau \in \mathcal{T}_{[s,T]}$.
\end{lemma}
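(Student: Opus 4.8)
The plan is to show that the compound Poisson process $(L_t)_{t\ge 0}$ has only finitely many jumps on the compact interval $[s,T]$, which immediately forces all but finitely many of its jump sizes to be bounded, and hence $\tau_M > T$ for all sufficiently large $M$. First I would recall that $L_t = \int_{|\eta|\ge 1}\eta\, N(t,d\eta)$ is a compound Poisson process, because the Poisson random measure $N$ has L\'{e}vy measure $\nu$ satisfying $\int_{\mathbb{R}^q_0}(|\eta|^2\wedge 1)\,\nu(d\eta) < \infty$, and therefore $\nu(E_1) = \nu(\{|\eta|\ge 1\}) < \infty$. Consequently the jump times $\tau_1 < \tau_2 < \cdots$ of $L$ form a Poisson process of finite rate $\nu(E_1)$, so on $[0,T]$ there are $\mathbb{P}$-a.s.\ only finitely many of them; equivalently $N((s,T]\times E_1) < \infty$ $\mathbb{P}$-a.s.

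Next I would fix an $\omega$ outside the null set on which $N((s,T]\times E_1)=\infty$. For this $\omega$ the set of jump sizes $\{\Delta L_t(\omega): s < t \le T,\ \Delta L_t(\omega)\ne 0\}$ is finite, hence bounded: there is a (random) $M_0(\omega) < \infty$ with $|\Delta L_t(\omega)| < M_0(\omega)$ for every jump time $t\in(s,T]$. Then for any $M \ge M_0(\omega)$ the event $\{\Delta L_t \in E_M\}$ never occurs for $t\in(s,T]$, so by definition $\tau_M(\omega) = \inf\{t>s: \Delta L_t(\omega)\in E_M\} > T$, i.e.\ $1_{\{\tau_M\le T\}}(\omega) = 0$. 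Since $M\mapsto 1_{\{\tau_M\le T\}}$ is nonincreasing (because $E_M$ shrinks as $M$ grows), this shows $1_{\{\tau_M\le T\}}(\omega)\to 0$ as $M\to\infty$ for every such $\omega$, which is the claimed $\mathbb{P}$-a.s.\ convergence.

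Finally, for the ``in particular'' statement, I would note that any $\tau\in\mathcal{T}_{[s,T]}$ satisfies $\tau\le T$ pointwise, so $\{\tau_M\le\tau\}\subseteq\{\tau_M\le T\}$ and hence $0\le 1_{\{\tau_M\le\tau\}}\le 1_{\{\tau_M\le T\}}\to 0$ $\mathbb{P}$-a.s. There is no real obstacle here; the only point requiring a moment's care is the reduction of ``$\Delta L_t\notin E_M$ for all $t\in(s,T]$'' to a statement about finitely many jumps, which is exactly what finiteness of $\nu(E_1)$ (and thus of $N((s,T]\times E_1)$) provides. The measurability of $\tau_M$ as a stopping time is standard since it is a first-entrance time of the adapted c\`{a}dl\`{a}g process $L$ into the closed-in-value set $E_M$, so I would only mention it in passing.
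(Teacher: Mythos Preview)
Your argument is correct. The paper itself does not give a proof of this lemma; it merely states that ``it is easy to verify that the following lemma holds'' and then records the statement. Your proposal supplies exactly the natural verification: $\nu(E_1)<\infty$ follows from the L\'{e}vy measure condition, so $L$ is a genuine compound Poisson process with almost surely finitely many jumps on $(s,T]$, whence the maximum jump size on that interval is a.s.\ finite and $\tau_M>T$ eventually. The monotonicity observation and the inclusion $\{\tau_M\le\tau\}\subseteq\{\tau_M\le T\}$ for $\tau\in\mathcal{T}_{[s,T]}$ finish the job cleanly. There is nothing to correct or add.
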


Set $\zeta^{M}_{0} = x$, and for $k = 1, 2, ... $, define 
\begin{eqnarray}
  \zeta^{M}_{k} &   =   &    X^{u,\tau_{k-1},\zeta^{M}_{k-1}}_{\tau_{k}}1_{\{ |\Delta L_{\tau_{k}}| < M \}} 
                           + X^{u,\tau_{k-1},\zeta^{M}_{k-1}}_{\tau_{k}-}1_{\{ |\Delta L_{\tau_{k}}| \geq M \}},  \nonumber
\end{eqnarray}
and
\begin{equation}\label{eqn:approxistate}
  X^{M}_{t} = \sum^{\infty}_{k=0}X^{u,\tau_{k},\zeta_{k}^{M}}_{t}1_{[\tau_{k},\tau_{k+1})}(t)1_{[s,T]}(t).
\end{equation}
By construction of solution, we see that $(X^{M}_{t})_{t \geq s}$ satisfies the following SDE: 
\begin{eqnarray}\label{eqn:dynsdemjumps}
  \left\{\begin{array}{ll} dX^{M}_{t} =    b(t,X^{M}_{t-},u_{t})dt + \sigma(t,X^{M}_{t-},u_{t})dW_{t} 
                                         + \displaystyle\int_{0 < |\eta| < 1}\gamma(t,X^{M}_{t-},u_{t},\eta)\tilde{N}(dt,d\eta)  \\
                                         \ \ \ \ \ \ \ \ \  + \displaystyle\int_{1 \leq |\eta| < M}\gamma(t,X^{M}_{t-},u_{t},\eta)N(dt,d\eta)   \\
                            \ \ \ \ \ \ \ \ \ \ \ \ \ \ \ \ \ \ \ \ \ \ \  \ \ \ \ \ \ \ \ \ \ \ \ \ \ \ \ \ \ \ \ \  \ \ \ \ \ \ \ \ \ \ \                      \\
                            X^{M}_{s} = x,  \ \ \ \ \ \ \ \ \ \ \ \ \ \ \ \ \ \ \ \ \ \ \ \ \ \  \ \ \ \ \ \ \ \ \ \ \ \ \ \ \ \ \ \ \ \ \ 
                                            \ \ \ \ \ \ \ \ \ \ \ \ \ \ \ 0 \leq s \leq t \leq T. \end{array}\right.
\end{eqnarray}
Again, to emphasize dependence on initial conditions and the control, we may write $X^{M}_{t}$ as $X^{u,s,x,M}_{t}$. \\

Following a standard argument, for example similar as in \cite{Kun04} (see pp340-341 in \cite{Kun04}), we can obtain the estimates below.
\begin{lemma}\label{lemma:estaproxi}
   For every $M \geq 1$, and all $p \geq 2$, there exists a $C_{T,p,M}  > 0$ such that
   \begin{enumerate}
      \item $\displaystyle\mathbb{E}\Big(\sup_{t \in [s,T]}\big|X^{u,s,x,M}_{t}\big|^{p}\Big) 
             \leq C_{T,p,M}\Big(1+|x|^{p}\Big)$,
      \item $\displaystyle\mathbb{E}\Big(\sup_{t \in [s,T]}\big|X^{u,s,x,M}_{t}-X^{u,\hat{s},\hat{x},M}_{t}\big|^{p}\Big) \leq C_{T,p,M}
             \Big(|x - \hat{x}|^{p}+ \big(1 +|x|^{p}\big)|s-\hat{s}|\Big)$.
   \end{enumerate}
\end{lemma}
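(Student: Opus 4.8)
The plan is to treat (\ref{eqn:dynsdemjumps}) as an SDE with globally Lipschitz coefficients of linear growth and, crucially, \emph{bounded} jumps, and then to run the classical moment estimates (Burkholder--Davis--Gundy together with Kunita's inequalities for the jump integrals, followed by Gronwall's lemma). The key observation is that in (\ref{eqn:dynsdemjumps}) the Poisson integral only ranges over $\{0<|\eta|<M\}$; hence, by \hyperlink{assump1}{Assumption 1}, on $\{|\eta|<M\}$ the coefficients $b,\sigma$ are Lipschitz with constant $C$, $\gamma(\cdot,\cdot,\cdot,\eta)$ is Lipschitz with constant $C_{M}|\eta|$ and of linear growth $C_{M}|\eta|(1+|x|)$, and the integrals $\int_{0<|\eta|<1}|\eta|^{2}\nu(d\eta)$, $\int_{1\le|\eta|<M}|\eta|\nu(d\eta)$ and $\int_{1\le|\eta|<M}|\eta|^{p}\nu(d\eta)$ are all finite (the latter two because $\nu(\{|\eta|\ge1\})<\infty$ and $|\eta|<M$ there). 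Since $b,\sigma,\gamma$ are continuous in $u$ and $A$ is compact, all these bounds, and hence all the constants appearing below, may be taken uniform in $u\in A$.

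For part 1, I would write $X^{u,s,x,M}_{t}$ in integral form from (\ref{eqn:dynsdemjumps}), take the supremum over $[s,t]$, raise to the $p$-th power and use $|\sum_{i=1}^{5}a_{i}|^{p}\le 5^{p-1}\sum_{i}|a_{i}|^{p}$. The drift term is handled by Jensen's inequality, the Brownian term by BDG, the compensated integral over $\{0<|\eta|<1\}$ by Kunita's first inequality (which produces a $(\int|\gamma|^{2}\nu)^{p/2}$ term and a $\int|\gamma|^{p}\nu$ term, both controlled via linear growth since $|\eta|^{p}\le|\eta|^{2}$ for $|\eta|<1$), and for the uncompensated integral over $\{1\le|\eta|<M\}$ I would first rewrite $N(dr,d\eta)=\tilde N(dr,d\eta)+\nu(d\eta)dr$, estimating the martingale part again by Kunita and the $\nu(d\eta)dr$-part by Jensen (here $\int_{1\le|\eta|<M}|\eta|\nu(d\eta)<\infty$ enters). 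Collecting terms gives
\begin{equation*}
  \mathbb{E}\Big(\sup_{r\in[s,t]}\big|X^{u,s,x,M}_{r}\big|^{p}\Big)\le C_{T,p,M}\Big(1+|x|^{p}+\int_{s}^{t}\mathbb{E}\big(\sup_{w\in[s,r]}|X^{u,s,x,M}_{w}|^{p}\big)\,dr\Big),
\end{equation*}
and Gronwall's lemma yields the claim, after the usual localization by $\rho_{n}=\inf\{t\ge s:|X^{u,s,x,M}_{t}|\ge n\}$ (to guarantee the left-hand side is finite before Gronwall is applied) followed by Fatou's lemma.

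For part 2, assume without loss of generality $s\le\hat s$. On $[s,\hat s]$ only $X^{u,s,x,M}$ moves; running the estimates of part 1 over the short interval $[s,\hat s]$ and keeping track of the powers of $(\hat s-s)$ that arise (namely $\hat s-s$, $(\hat s-s)^{p/2}$ and $(\hat s-s)^{p}$, all dominated by a $T$-dependent multiple of $(\hat s-s)$ since $p\ge2$) gives $\mathbb{E}(\sup_{t\in[s,\hat s]}|X^{u,s,x,M}_{t}-x|^{p})\le C_{T,p,M}(1+|x|^{p})|s-\hat s|$, hence $\mathbb{E}|X^{u,s,x,M}_{\hat s}-\hat x|^{p}\le C_{T,p,M}(|x-\hat x|^{p}+(1+|x|^{p})|s-\hat s|)$. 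On $[\hat s,T]$ the difference $Y_{t}:=X^{u,s,x,M}_{t}-X^{u,\hat s,\hat x,M}_{t}$ solves a linear SDE driven by the differences of the coefficients; applying the Lipschitz bounds of \hyperlink{assump1}{Assumption 1} and the same BDG/Kunita machinery gives
\begin{equation*}
  \mathbb{E}\Big(\sup_{r\in[\hat s,t]}|Y_{r}|^{p}\Big)\le C_{T,p,M}\Big(\mathbb{E}|Y_{\hat s}|^{p}+\int_{\hat s}^{t}\mathbb{E}\big(\sup_{w\in[\hat s,r]}|Y_{w}|^{p}\big)\,dr\Big),
\end{equation*}
and Gronwall yields $\mathbb{E}(\sup_{r\in[\hat s,T]}|Y_{r}|^{p})\le C_{T,p,M}\mathbb{E}|Y_{\hat s}|^{p}$. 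Combining the two intervals (interpreting $X^{u,\hat s,\hat x,M}_{t}$ for $t<\hat s$ as frozen at $\hat x$) gives the stated bound. The only delicate points are: (i) keeping every constant independent of the control $u$, which comes from compactness of $A$ together with continuity of the coefficients and the $M$-dependent constant $C_{M}$; (ii) the a priori finiteness needed before Gronwall, handled by the localization above; and (iii) reducing the uncompensated jump integral over $\{1\le|\eta|<M\}$ correctly — equivalently, one may avoid Kunita there and argue pathwise via the construction (\ref{eqn:approxistate}), inducting over the finitely many jump epochs $\tau_{k}\le T$, since each step only adds a jump of size $<M$. I do not expect a genuine obstacle here; the content is entirely in bookkeeping the constants and the powers of $|s-\hat s|$.
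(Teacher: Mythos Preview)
Your proposal is correct and is precisely the standard argument the paper has in mind: it does not give a proof but simply refers to the argument on pp.\,340--341 of \cite{Kun04}, which is exactly the BDG/Kunita-inequality plus Gronwall scheme you outline, with the same observation that on $\{|\eta|<M\}$ all jump moments are finite so that the classical estimates apply uniformly in $u\in A$.
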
 

\begin{remark}
  We may extend $X^{u,\hat{s},\hat{x},M}$ by setting $X^{u,\hat{s},\hat{x},M}_{t} = \hat{x}$ for all $t \in [s,\hat{s}]$ (see p175 in \cite{Zal11}).
\end{remark}

For the sequence of state processes $(X^{M}_{t})_{t \geq s}$, we define their corresponding revenual functionals $V^{u,M}$ by
\begin{eqnarray}
   V^{u,M}(s,x) &   =   &  \mathbb{E}\Big(\int_{s}^{T}f(t,X^{u,s,x, M}_{t},u_{t})dt + h(X^{u,s,x, M}_{T})\Big). \nonumber 
\end{eqnarray}
The value functions $V^{M}$ is given by  
\begin{eqnarray}\label{eqn:VMvaluefunction}
   V^{M}(s,x) &   =   & \sup_{u \in \mathcal{A}_{s}} V^{u,M}(s,x).  
\end{eqnarray}

Next, we obtain the following lemma.
\begin{lemma}\label{lemma:convalM}
  For every $(s,x) \in [0, T] \times \mathbb{R}^{d}$, as $M \rightarrow \infty$, $V^{M}(s,x) \rightarrow V(s,x)$.
\end{lemma}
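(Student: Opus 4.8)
The plan is to show that for every admissible control $u\in\mathcal{A}_s$ the difference $V^{u,M}(s,x)-V^{u}(s,x)\to 0$ as $M\to\infty$, with a rate that is uniform over $u\in\mathcal{A}_s$; taking suprema over $u$ then yields $V^M(s,x)\to V(s,x)$. Since $f$ and $h$ are bounded and continuous, it suffices to control $X^{u,s,x,M}_t - X^{u,s,x}_t$; the key observation is that on the event $\{\tau_M>T\}$ the processes $X^M$ and $X$ coincide, because cutting off jumps of size $\geq M$ changes nothing when no such jump occurs before time $T$. First I would write, for a fixed $u$,
\begin{eqnarray*}
 \big|V^{u,M}(s,x)-V^{u}(s,x)\big| &\leq& \mathbb{E}\Big(\int_s^T\big|f(t,X^{u,s,x,M}_t,u_t)-f(t,X^{u,s,x}_t,u_t)\big|dt\Big) \\
 && {}+\ \mathbb{E}\big|h(X^{u,s,x,M}_T)-h(X^{u,s,x}_T)\big|,
\end{eqnarray*}
and then split each expectation over the complementary events $\{\tau_M>T\}$ and $\{\tau_M\leq T\}$. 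On $\{\tau_M>T\}$ the integrands vanish identically by pathwise uniqueness of the SDE solutions (both $X$ and $X^M$ solve the same equation on that event). On $\{\tau_M\leq T\}$ I would bound the integrands crudely by $2\|f\|_\infty$ and $2\|h\|_\infty$, so the whole difference is at most $\big(2T\|f\|_\infty+2\|h\|_\infty\big)\,\mathbb{P}(\tau_M\leq T)$.

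The crucial point is that this bound is independent of $u$, so $\sup_{u\in\mathcal{A}_s}\big|V^{u,M}(s,x)-V^{u}(s,x)\big|\leq \big(2T\|f\|_\infty+2\|h\|_\infty\big)\,\mathbb{P}(\tau_M\leq T)$, which immediately gives $|V^M(s,x)-V(s,x)|\leq\big(2T\|f\|_\infty+2\|h\|_\infty\big)\,\mathbb{P}(\tau_M\leq T)$ by the elementary inequality $|\sup_u a_u-\sup_u b_u|\leq\sup_u|a_u-b_u|$. It then remains to show $\mathbb{P}(\tau_M\leq T)\to 0$ as $M\to\infty$, and this is exactly Lemma 3.1: since $1_{\{\tau_M\leq T\}}\to 0$ $\mathbb{P}$-a.s.\ and is dominated by $1$, dominated convergence gives $\mathbb{P}(\tau_M\leq T)=\mathbb{E}\big(1_{\{\tau_M\leq T\}}\big)\to 0$. (One can also argue directly: $\tau_M\leq T$ means the compound Poisson process $L$ has at least one jump of size $\geq M$ in $[s,T]$, whose probability is $1-\exp\big(-(T-s)\nu(E_M)\big)\to 0$ since $\nu(E_M)\to 0$ as $M\to\infty$ by the integrability of $(|\eta|^2\wedge 1)$ against $\nu$.)

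The main obstacle, and the one point that needs care rather than a one-line argument, is justifying that $X^{u,s,x,M}_t=X^{u,s,x}_t$ for all $t\in[s,T]$ on the event $\{\tau_M\leq T\}^c$. This follows from the explicit construction in (\ref{eqn:approxistate}): before the first ``large'' jump the building blocks $X^{u,\tau_k,\zeta^M_k}$ and the original solution are governed by the same dynamics with the same initial data, and on $\{\tau_M>T\}$ no truncation ever takes effect, so the two $\omega$-by-$\omega$ constructions agree; pathwise uniqueness for (\ref{eqn:dynsdelarge}) then pins them together on all of $[s,T]$. Everything else — measurability, the uniformity in $u$, and the final limit — is routine.
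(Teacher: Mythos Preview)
Your proof is correct and rests on the same key observation the paper uses, namely that $1_{\{\tau_M>T\}}X^{u,s,x,M}_t=1_{\{\tau_M>T\}}X^{u,s,x}_t$ $\mathbb{P}$-a.s.\ for all $t\in[s,T]$. The route, however, is more direct than the paper's. The paper first establishes the \emph{pointwise} convergence $V^{u,M}(s,x)\to V^{u}(s,x)$ for each fixed $u$, then splits into a $\liminf$ part (trivial) and a $\limsup$ part handled by picking an $\epsilon$-optimal control $u^{\epsilon,M}$ for $V^{M}$ and invoking the pointwise convergence at that control. You instead notice that the bound
\[
\big|V^{u,M}(s,x)-V^{u}(s,x)\big|\le\big(2T\|f\|_\infty+2\|h\|_\infty\big)\,\mathbb{P}(\tau_M\le T)
\]
is \emph{uniform in $u$}, because $\tau_M$ is defined purely in terms of the driving Poisson measure and does not depend on the control. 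This uniformity lets you pass immediately to $|V^{M}(s,x)-V(s,x)|$ via $|\sup_u a_u-\sup_u b_u|\le\sup_u|a_u-b_u|$, bypassing the $\liminf/\limsup$ split entirely. Your argument is shorter, and it also sidesteps a delicate point in the paper's $\limsup$ step, where the threshold $M_{s,x,u,\delta}$ in principle depends on the control $u^{\epsilon,M}$, which itself depends on $M$; the uniform bound makes this circularity a non-issue.
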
  
\begin{proof}
  Since $1_{\{\tau_{M} > T\}} X^{u,s,x,M}_{t} =  1_{\{\tau_{M} > T\}} X^{u,s,x}_{t}$ ($\mathbb{P}$-a.s.) for every $t \in [s, T]$, and $f$ and $h$ are bounded, 
  we see that for every $u \in \mathcal{A}_{s}$, we have
  \begin{eqnarray}
    V^{u,M}(s,x)   &  \leq  &    \mathbb{E}\Bigg(\int_{s}^{T}f(t,X^{u,s,x}_{t},u_{t})1_{\{\tau_{M} > T\}}dt  
                               + 1_{\{\tau_{M} > T\}}h(X^{u,s,x}_{T}) + C_{T}1_{\{\tau_{M} \leq T\}}\Bigg). \nonumber
  \end{eqnarray}
  
  As $M \rightarrow \infty$, thanks again to boundedness of $f$ and $h$, we can apply the Dominated Convergence Theorem. Thus, together with 
  the continuity of $f$ and $h$, and \hyperref[lemma:pwconvergence]{Lemma \ref*{lemma:pwconvergence}}, we obtain
  \begin{eqnarray}\label{eqn:convalM1}
    \lim_{M \rightarrow \infty}V^{u,M}(s,x) &   =   &  \mathbb{E}\Big(\int_{s}^{T}f(t,X^{u,s,x}_{t},u_{t})dt + h(X^{u,s,x}_{T})\Big) \nonumber \\
                                            &   =   & V^{u}(s,x).
  \end{eqnarray}
  This then yields
  \begin{eqnarray}
    \liminf_{M \rightarrow \infty}V^{M}(s,x)  \geq   \lim_{M \rightarrow \infty}V^{u,M}(s,x) = V^{u}(s,x). \nonumber
  \end{eqnarray}
  Taking supremum over $\mathcal{A}_{s}$, we find
  \begin{eqnarray}\label{eqn:convalM2}
    \liminf_{M \rightarrow \infty}V^{M}(s,x) &   \geq   &  V(s,x).
  \end{eqnarray}
  To show the converse inequality, we observe from (\ref{eqn:convalM1}) that for every $u \in \mathcal{A}_{s}$, $(s,x) \in [0,T]\times\mathbb{R}^{d}$,
  and all $\delta > 0$, there exists an $M_{s,x,u,\delta}$ such that for all $M > M_{s,x,u,\delta}$ we have 
  \begin{eqnarray}\label{eqn:valfunest}
     \big|V^{u,M}(s,x) - V^{u}(s,x)\big| & \leq  &  \delta. 
  \end{eqnarray}
  Using (\ref{eqn:valfunest}), we see that for every $M > M_{s,x,u,\delta}$, there exists an $\epsilon$-optimal control $u^{\epsilon,M}$ such that
  \begin{eqnarray}
    V^{M}(s,x) &   \leq   &  V^{u^{\epsilon,M},M}(s,x) + \epsilon   \nonumber \\
               &   \leq   &  V^{u^{\epsilon,M}}(s,x) + \delta  + \epsilon  \nonumber \\
               &   \leq   &  V(s,x)  + \delta  + \epsilon.  \nonumber
  \end{eqnarray}
  By first letting $M \rightarrow \infty$, we obtain
  \begin{eqnarray}\label{eqn:convalM3}
    \limsup_{M \rightarrow \infty}V^{M}(s,x) &   \leq   &  V(s,x) + \delta +\epsilon.
  \end{eqnarray}
  Since $\delta$ and $\epsilon$ are arbitrary, this yields the converse inequality. The proof is completed. 
\end{proof}

Next, we present two results which we borrowed from \cite{Zal11} (modified version of Lemma 2.3 in \cite{Zal11}). Since the author does not provide a proof, 
we prove it here in our context. \\

Now, under the assumption that $f$ and $h$ are continuous, we know that the functions $f$ and $h$ admit a joint modulus of continuity (see Lemma 2.3 in 
\hbox{\cite{Zal11}}):
\begin{eqnarray}
   \rho(\alpha, \beta) & = &  \sup_{\substack{t \in [0,T], u \in A, \\ x, \hat{x} \in \overline{B(0,\beta)}, |x-\hat{x}| \leq \alpha}}
                           \Big(|f(t,x,u) - f(t,\hat{x},u)| + |h(x) - h(\hat{x})|\Big), \nonumber 
\end{eqnarray}
such that $\displaystyle\lim_{\beta \rightarrow \infty}\lim_{\alpha \rightarrow 0}\rho(\alpha, \beta) = 0$. Thus, we have the first result below.
\begin{props}\label{prop:vfljumpgen}
   There exists constants $C > 0$ and $C_{T,p,M} > 0$, such that for every $u \in \mathcal{A}_{s}$, $(s,x), (\hat{s},\hat{x}) \in [0,T] \times \mathbb{R}^{d}$, and 
   all $p \geq 2$, $\alpha > 0$, $\beta > 0$, 
   \begin{eqnarray}
                       \big|V^{u,M}(s,x) - V^{u,M}(\hat{s},\hat{x})\big|  
       &   \leq   &    C_{T}\rho(\alpha,\beta) + C_{T,p,M}\frac{|x - \hat{x}|^{p}+(1+|\hat{x}|^{p})|s-\hat{s}|}{\alpha^p}  \nonumber \\
       &          &  + C_{T,p,M}\frac{(1+|x|^{p}+|\hat{x}|^{p})}{\beta^{p}}, \nonumber 
   \end{eqnarray}
\end{props}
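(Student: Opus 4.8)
The plan is to compare the two approximating state processes $X^{u,s,x,M}$ and $X^{u,\hat{s},\hat{x},M}$ pathwise, controlling their discrepancy by a good-event/bad-event split: on a ``good'' event the joint modulus of continuity $\rho$ bounds the increments of $f$ and $h$ directly, while the complementary event has small probability by the moment estimates of Lemma \ref{lemma:estaproxi}. I would first assume without loss of generality that $s\le\hat{s}$ and, using the extension introduced in the Remark above, set $X^{u,\hat{s},\hat{x},M}_{t}=\hat{x}$ for $t\in[s,\hat{s}]$, so that both trajectories are defined on all of $[s,T]$; this is exactly the convention under which the second estimate in Lemma \ref{lemma:estaproxi} is stated. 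Rearranging the definition of $V^{u,M}$ and using that the second process is constant on $[s,\hat{s}]$, one obtains
\begin{align*}
 V^{u,M}(s,x) - V^{u,M}(\hat{s},\hat{x}) &= \mathbb{E}\Big(\int_{s}^{T}\big(f(t,X^{u,s,x,M}_{t},u_{t}) - f(t,X^{u,\hat{s},\hat{x},M}_{t},u_{t})\big)\,dt \\
 &\qquad\quad + h(X^{u,s,x,M}_{T}) - h(X^{u,\hat{s},\hat{x},M}_{T})\Big) + \mathbb{E}\int_{s}^{\hat{s}} f(t,\hat{x},u_{t})\,dt ,
\end{align*}
where the last ``boundary'' term has absolute value at most $C_{T}|s-\hat{s}|$ because $f$ is bounded.

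For the given $\alpha,\beta>0$ I would then introduce the event
\[
 G = \Big\{\sup_{t\in[s,T]}\big(|X^{u,s,x,M}_{t}|\vee|X^{u,\hat{s},\hat{x},M}_{t}|\big)\le\beta\Big\}\cap\Big\{\sup_{t\in[s,T]}\big|X^{u,s,x,M}_{t} - X^{u,\hat{s},\hat{x},M}_{t}\big|\le\alpha\Big\}.
\]
On $G$, the definition of $\rho$ gives $|f(t,X^{u,s,x,M}_{t},u_{t})-f(t,X^{u,\hat{s},\hat{x},M}_{t},u_{t})|\le\rho(\alpha,\beta)$ for every $t\in[s,T]$ and $|h(X^{u,s,x,M}_{T})-h(X^{u,\hat{s},\hat{x},M}_{T})|\le\rho(\alpha,\beta)$, so the part of the expectation carried by $G$ is bounded by $(T+1)\rho(\alpha,\beta)=C_{T}\rho(\alpha,\beta)$. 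On $G^{c}$ I would use boundedness of $f$ and $h$ to bound the integrand and the terminal term by constants, leaving a contribution of at most $C_{T}\,\mathbb{P}(G^{c})$; and $\mathbb{P}(G^{c})$ is estimated by splitting over the three defining events and applying Markov's inequality with exponent $p\ge2$ together with Lemma \ref{lemma:estaproxi}: this gives $\mathbb{P}(\sup_{t}|X^{u,s,x,M}_{t}|>\beta)\le C_{T,p,M}\beta^{-p}(1+|x|^{p})$, its analogue with $(\hat{s},\hat{x})$ in place of $(s,x)$ (extending $X^{u,\hat{s},\hat{x},M}$ by $\hat{x}$ on $[s,\hat{s}]$ does not change this supremum estimate), and $\mathbb{P}(\sup_{t}|X^{u,s,x,M}_{t}-X^{u,\hat{s},\hat{x},M}_{t}|>\alpha)\le C_{T,p,M}\alpha^{-p}(|x-\hat{x}|^{p}+(1+|x|^{p})|s-\hat{s}|)$.

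Collecting the $G$-part, the $G^{c}$-part and the boundary term, and then using the elementary inequality $1+|x|^{p}\le C_{p}(|x-\hat{x}|^{p}+1+|\hat{x}|^{p})$ together with $|s-\hat{s}|\le T$ to replace $(1+|x|^{p})|s-\hat{s}|$ by $C_{T,p}\big(|x-\hat{x}|^{p}+(1+|\hat{x}|^{p})|s-\hat{s}|\big)$, one arrives at the asserted bound (the $C_{T}|s-\hat{s}|$ coming from the boundary term being of lower order, indeed dominated by the $\alpha^{-p}$ term as soon as $\alpha\le1$). There is no deep obstacle here: it is a routine good-event/bad-event estimate, and the only points that need attention are the bookkeeping around the mismatch of the two integration intervals (which forces the use of the extension from the Remark and produces the boundary term), and the conversion of the $|x|^{p}$-dependence of Lemma \ref{lemma:estaproxi} into the $|\hat{x}|^{p}$-dependence appearing in the statement; one should also make sure to invoke Lemma \ref{lemma:estaproxi} only for $p\ge2$, as hypothesized.
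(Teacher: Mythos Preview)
Your proof is correct and follows essentially the same approach as the paper: a good-event/bad-event split combined with Markov's inequality and the moment estimates of Lemma~\ref{lemma:estaproxi}. The only organizational difference is that the paper first applies the triangle inequality through the intermediate point $(s,\hat{x})$, separating a pure space-shift $(s,x)\to(s,\hat{x})$ from a pure time-shift $(s,\hat{x})\to(\hat{s},\hat{x})$, which produces the $(1+|\hat{x}|^{p})|s-\hat{s}|$ term directly from Lemma~\ref{lemma:estaproxi}; you instead compare the two processes in one step and then convert the resulting $(1+|x|^{p})$ factor into $(1+|\hat{x}|^{p})$ via $1+|x|^{p}\le C_{p}(|x-\hat{x}|^{p}+1+|\hat{x}|^{p})$.
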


\begin{proof}
   For $u \in \mathcal{A}_{s}$, and $(s,x), (\hat{s},\hat{x}) \in [0,T] \times \mathbb{R}^{d}$, we see that 
   \begin{eqnarray}\label{eqn:unifcon0}
       &          &    \big|V^{u,M}(s,x) - V^{u,M}(\hat{s},\hat{x})\big|   \nonumber  \\
       &   \leq   &    \big|V^{u,M}(s,x) - V^{u,M}(s,\hat{x})\big|  
                     + \big|V^{u,M}(s,\hat{x}) - V^{u,M}(\hat{s},\hat{x})\big| \nonumber \\
       &    =     &  (I_{1}) + (I_{2}).
   \end{eqnarray}   
   The first term in (\ref{eqn:unifcon0}) yields 
   \begin{eqnarray}
     (I_{1}) &   =     &     \big|V^{u,M}(s,x) - V^{u,M}(s,\hat{x})\big|  \nonumber \\
             &  \leq   &     \mathbb{E}\Bigg(\int_{s}^{T}\big|f(t,X^{u,s,x, M}_{t},u_{t}) - f(t,X^{u,s,\hat{x}, M}_{t},u_{t})\big|dt   
                          +  \big|h(X^{u,s,x, M}_{T}) -  h(X^{u,s,\hat{x}, M}_{T})\big|\Bigg) \nonumber \\
             &    =    &     (I_{1,1}) + (I_{1,2}). \nonumber
   \end{eqnarray}
   The second term in (\ref{eqn:unifcon0}) can be estimated as
   \begin{eqnarray}
     (I_{2}) &    =    &     \big|V^{u,M}(s,\hat{x}) - V^{u,M}(\hat{s},\hat{x})\big|  \nonumber \\
             &  \leq   &     \mathbb{E}\Bigg(\int_{s}^{\hat{s}}\big|f(t,X^{u,s,\hat{x}, M}_{t},u_{t}) 
                           - f(t,X^{u,\hat{s},\hat{x}, M}_{t},u_{t})\big|dt  
                           +  \big|h(X^{u,s,\hat{x}, M}_{T}) -  h(X^{u,\hat{s},\hat{x}, M}_{T})\big|\Bigg) \nonumber \\
             &    =    &     (I_{1,3}) + (I_{1,4}). \nonumber
   \end{eqnarray}   
   Each of $(I_{1,1}) - (I_{1,4})$ can be estimated by using the bounds of $f$ and $h$, the Markov inequality, and 
   \hyperref[lemma:estaproxi]{Lemma \ref*{lemma:estaproxi}}. For example, for $(I)$ we have   
   \begin{eqnarray}
     (I_{1,1}) &   =   &   \mathbb{E}\Bigg(\int_{s}^{T}\big|f(t,X^{u,s,x, M}_{t},u_{t}) - f(t,X^{u,s,\hat{x}, M}_{t},u_{t})\big|dt\Bigg)  \nonumber \\
               &  \leq &    C_{T}\mathbb{P}\Bigg(\sup_{t \in [s,T]}\big|X^{u,s,x, M}_{t} - X^{u,s,\hat{x}, M}_{t}\big|^{p} > \alpha^p\Bigg) 
                          + C_{T}\rho(\alpha,\beta) \nonumber       
   \end{eqnarray}
   \begin{eqnarray}
              &       &  + C_{T}\mathbb{P}\Bigg(\sup_{t \in [s,T]}\big|X^{u,s,x, M}_{t}\big|^{p} \geq \beta^{p} \Bigg) 
                         + C_{T}\mathbb{P}\Bigg(\sup_{t \in [s,T]}\big|X^{u,s,\hat{x}, M}_{t}\big|^{p} \geq \beta^{p} \Bigg) \nonumber \\
              &  \leq &    C_{T}\rho(\alpha,\beta) + C_{T,p,M}\frac{|x - \hat{x}|^{p}}{\alpha^p}
                         + C_{T,p,M}\frac{(1+|x|^{p}+|\hat{x}|^{p})}{\beta^{p}}. \nonumber 
   \end{eqnarray}
   In a similar way, we obtain
   \begin{eqnarray}
       (I_{1,2})  &  \leq   &   C_{T}\rho(\alpha,\beta) + C_{T,p,M}\frac{|x - \hat{x}|^{p}}{\alpha^p}
                              + C_{T,p,M}\frac{(1+|x|^{p}+|\hat{x}|^{p})}{\beta^{p}},  \nonumber \\
       (I_{1,3})  &  \leq   &   C_{T}\rho(\alpha,\beta) + C_{T,p,M}\frac{(1+|\hat{x}|^{p})|s-\hat{s}|}{\alpha^p} 
                              + C_{T,p,M}\frac{(1+|\hat{x}|^{p})}{\beta^{p}}, \nonumber \\
       (I_{1,4})  &  \leq   &   C_{T}\rho(\alpha,\beta) + C_{T,p,M}\frac{(1+|\hat{x}|^{p})|s-\hat{s}|}{\alpha^p} 
                              + C_{T,p,M}\frac{(1+|\hat{x}|^{p})}{\beta^{p}}. \nonumber 
   \end{eqnarray}
   Combing $(I)-(IV)$, we complete the proof. \\
\end{proof}

Since
\begin{eqnarray}\label{eqn:lipvalinitialgen}
              |V^{M}(s,x) - V^{M}(s,\hat{x})|    
  &    =   &  \Big|\sup_{u \in \mathcal{A}_{s}} V^{u,M}(s,x) - \sup_{u \in \mathcal{A}_{s}} V^{u,M}(s,\hat{x})\Big| \nonumber \\
  &  \leq  &  \sup_{u \in \mathcal{A}_{s}} \Big|V^{u,M}(s,x) - V^{u,M}(s,\hat{x})\Big|, \nonumber 
\end{eqnarray}
the following corollary is a direct consequence of \hyperref[prop:vfljumpgen]{Proposition \ref*{prop:vfljumpgen}}. 
\begin{corollary}\label{corollary:unifvaluefunM}
    For all $p \geq 2$, there exists constants $C_{T} > 0$ and $C_{T,p,M} > 0$ such that for $\alpha > 0$, $\beta > 0$, and $(s,x), (\hat{s},\hat{x}) \in [0,T] \times 
   \mathbb{R}^{d}$, we have
   \begin{eqnarray}
       |V^{M}(s,x) - V^{M}(\hat{s},\hat{x})|   &  \leq  &    C_{T}\rho(\alpha,\beta) 
                                                           + C_{T,p,M}\frac{|x - \hat{x}|^{p}+(1+|\hat{x}|^{p})|s-\hat{s}|}{\alpha^p} \nonumber \\
                                               &        &  + C_{T,p,M}\frac{(1+|x|^{p}+|\hat{x}|^{p})}{\beta^{p}}. \nonumber 
   \end{eqnarray} 
\end{corollary}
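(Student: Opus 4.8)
The plan is to obtain the corollary essentially for free from Proposition~\ref{prop:vfljumpgen}, using only the fact that the supremum operation is nonexpansive: if $F,G\colon\mathcal{A}_{s}\to\mathbb{R}$ are bounded, then $|\sup_{\mathcal{A}_{s}}F-\sup_{\mathcal{A}_{s}}G|\le\sup_{\mathcal{A}_{s}}|F-G|$. For each $u$ one has $F(u)\le G(u)+\sup_{\mathcal{A}_{s}}|F-G|\le\sup_{\mathcal{A}_{s}}G+\sup_{\mathcal{A}_{s}}|F-G|$; taking the supremum over $u$ and then exchanging the roles of $F$ and $G$ gives the inequality. Because $f$ and $h$ are bounded we have $|V^{u,M}|\le C_{T}$ uniformly in $u$, so the suprema defining $V^{M}(s,x)$ and $V^{M}(\hat{s},\hat{x})$ are finite and this elementary inequality is applicable.

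First I would arrange that both value functions are expressed as suprema over one and the same index set. Assuming without loss of generality that $s\le\hat{s}$, the increments of $W$ and of $N$ over $[\hat{s},t]$ are measurable with respect to the $\sigma$-algebras generated by the corresponding increments over $[s,t]$, whence $(\mathcal{F}^{\hat{s}}_{t})_{t\ge\hat{s}}\subseteq(\mathcal{F}^{s}_{t})_{t\ge s}$ and, after extending a control by a fixed element of $A$ on $[s,\hat{s}]$, $\mathcal{A}_{\hat{s}}\subseteq\mathcal{A}_{s}$. Using the convention of the remark following Lemma~\ref{lemma:estaproxi} (set $X^{u,\hat{s},\hat{x},M}_{t}=\hat{x}$ on $[s,\hat{s}]$), the quantities $V^{u,M}(s,x)$ and $V^{u,M}(\hat{s},\hat{x})$ both make sense for $u\in\mathcal{A}_{s}$, and $V^{M}(s,x)=\sup_{u\in\mathcal{A}_{s}}V^{u,M}(s,x)$, $V^{M}(\hat{s},\hat{x})=\sup_{u\in\mathcal{A}_{s}}V^{u,M}(\hat{s},\hat{x})$ are then suprema over the common class $\mathcal{A}_{s}$.

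Then, applying the nonexpansiveness of the supremum with $F(u)=V^{u,M}(s,x)$ and $G(u)=V^{u,M}(\hat{s},\hat{x})$,
\[
  \big|V^{M}(s,x)-V^{M}(\hat{s},\hat{x})\big|\ \le\ \sup_{u\in\mathcal{A}_{s}}\big|V^{u,M}(s,x)-V^{u,M}(\hat{s},\hat{x})\big|,
\]
and Proposition~\ref{prop:vfljumpgen} bounds every term inside the supremum on the right by $C_{T}\rho(\alpha,\beta)+C_{T,p,M}\big(|x-\hat{x}|^{p}+(1+|\hat{x}|^{p})|s-\hat{s}|\big)\alpha^{-p}+C_{T,p,M}\big(1+|x|^{p}+|\hat{x}|^{p}\big)\beta^{-p}$. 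Since this bound does not involve $u$, it also bounds the supremum, and this is exactly the asserted inequality, valid for every $\alpha,\beta>0$ and every $p\ge2$.

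The argument is a one-liner once Proposition~\ref{prop:vfljumpgen} is available; the only point that needs a moment's thought is the bookkeeping that realizes $V^{M}(s,x)$ and $V^{M}(\hat{s},\hat{x})$ as suprema over a common control class so the supremum inequality applies cleanly. Even that is inessential, because the estimate of Proposition~\ref{prop:vfljumpgen} is uniform in the control and therefore survives the passage to the supremum however the two admissible classes are matched up.
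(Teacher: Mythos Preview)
Your proof is correct and follows the same approach as the paper: both reduce the corollary to Proposition~\ref{prop:vfljumpgen} via the elementary inequality $|\sup F-\sup G|\le\sup|F-G|$. The paper in fact displays only the case $s=\hat{s}$ of this inequality and declares the corollary a ``direct consequence,'' whereas you additionally address the bookkeeping needed when $s\ne\hat{s}$ (realizing $V^{M}(s,x)$ and $V^{M}(\hat{s},\hat{x})$ as suprema over a common control class), so your argument is if anything slightly more careful than the paper's.
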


In order to prove the DPP, the Markov characterization of the state process (see for example, Lemma 3.2 in \cite{Zal11}) plays an important role. 
The next lemma states the controlled Markovian property for jump processes. 

\begin{lemma}\label{lemma:eqn:markovstate} 
The following two assertions hold.
 \begin{enumerate}
    \item For almost every $\omega \in \Omega$, all $\tau \in \mathcal{T}_{[s,T]}$, and $u \in \mathcal{A}_{s}$, there exists a control $\hat{u}^{\omega} 
    \in \mathcal{A}_{\tau}$ such that 
    \begin{eqnarray}
             \mathbb{E}\Big(\int_{\tau}^{T}f(r,X^{u,s,x,M}_{r},u_{r})dr + h(X^{u,s,x,M}_{T})\big|\mathcal{F}_{\tau}\Big)(\omega) 
           = V^{\hat{u}^{\omega},M}(\tau(\omega),X^{u,s,x,M}_{\tau}(\omega)).  \nonumber 
    \end{eqnarray}   
    \item For every $t \in [s,T]$, and all $\tau \in \mathcal{T}_{[s,t]}$, there exists a control $\hat{u} \in \mathcal{A}_{s}$, where 
          \begin{eqnarray}
            \hat{u}_{r} := u_{r}1_{\{r \in [s,\tau]\}} + \tilde{u}_{r}1_{\{r \in (\tau,T]\}}, \nonumber 
          \end{eqnarray}
          and $\tilde{u} \in \mathcal{A}_{t}$, such that
          \begin{eqnarray}
              \mathbb{E}\Big(\int_{\tau}^{T}f(r,X^{\hat{u},s,x,M}_{r},u_{r})dr + h(X^{\hat{u},s,x,M}_{T})\big|\mathcal{F}_{\tau}\Big)(\omega) 
            = V^{\tilde{u},M}(\tau(\omega),X^{u,s,x,M}_{\tau}(\omega)) \ \ \mathbb{P}\textnormal{-a.s.}  \nonumber 
          \end{eqnarray}
 \end{enumerate}
\end{lemma}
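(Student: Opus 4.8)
The plan is to establish both assertions by exploiting the flow property of the approximating SDE \eqref{eqn:dynsdemjumps} together with the independence structure of the Wiener--Poisson space, following the regular-conditional-probability argument familiar from the Markov characterization of controlled SDE solutions. For assertion (1), I would first argue that, by the uniqueness of solutions and the flow property, for $r \geq \tau$ the process $X^{u,s,x,M}_{r}$ coincides $\mathbb{P}$-a.s. with $X^{u,\tau,X^{u,s,x,M}_{\tau},M}_{r}$, where on the right the equation is driven by the shifted noise $(W_{\cdot} - W_{\tau}, N(\cdot) - N(\tau \wedge \cdot))$. Since $\tau \in \mathcal{T}_{[s,T]}$ is a stopping time for $(\mathcal{F}^{s}_{t})$, this shifted noise is independent of $\mathcal{F}_{\tau}$ and has the same law as the original noise; this is exactly the point where the product structure $\Omega = \Omega_{W} \times \Omega_{N}$ and the filtration $(\mathcal{F}^{s}_{t})$ enter. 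One then disintegrates: freezing $\omega$ determines the values $\tau(\omega)$ and $X^{u,s,x,M}_{\tau}(\omega) =: y(\omega)$, and the control $u$ restricted to $(\tau,T]$ becomes, after the time/space shift, an admissible control $\hat{u}^{\omega} \in \mathcal{A}_{\tau(\omega)}$ on the shifted space. Taking the conditional expectation given $\mathcal{F}_{\tau}$ and using Fubini together with the independence yields precisely
\[
   \mathbb{E}\Big(\int_{\tau}^{T}f(r,X^{u,s,x,M}_{r},u_{r})dr + h(X^{u,s,x,M}_{T})\,\big|\,\mathcal{F}_{\tau}\Big)(\omega) = V^{\hat{u}^{\omega},M}\big(\tau(\omega),y(\omega)\big).
\]
The measurability of $\omega \mapsto \hat{u}^{\omega}$ as a map into controls, and the fact that the exceptional null set can be chosen uniformly, is handled by a standard monotone-class / approximation argument on simple controls, invoking the joint continuity estimates of Lemma \ref{lemma:estaproxi} to pass to the limit.

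For assertion (2) the argument runs in the opposite direction: given $\tilde{u} \in \mathcal{A}_{t}$ and $\tau \in \mathcal{T}_{[s,t]}$, I would concatenate $u$ on $[s,\tau]$ with $\tilde{u}$ on $(\tau,T]$ to form $\hat{u}$, check it is $(\mathcal{F}^{s}_{t})$-predictable (hence in $\mathcal{A}_{s}$), and then apply the flow property to $X^{\hat{u},s,x,M}$ past time $\tau$. Because $\hat{u}$ agrees with $u$ before $\tau$ we have $X^{\hat{u},s,x,M}_{\tau} = X^{u,s,x,M}_{\tau}$, and for $r > \tau$ the process solves the SDE started from this value driven by the post-$\tau$ noise and steered by $\tilde{u}$. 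Conditioning on $\mathcal{F}_{\tau}$ and using the independence of the post-$\tau$ noise from $\mathcal{F}_{\tau}$ exactly as above gives the conditional expectation equal to $V^{\tilde{u},M}(\tau(\omega),X^{u,s,x,M}_{\tau}(\omega))$, $\mathbb{P}$-a.s. Here the only delicate point is that $\tilde{u} \in \mathcal{A}_{t}$ is predictable with respect to $(\mathcal{F}^{t}_{r})$ but needs to be interpreted, after the time shift by the random time $\tau \leq t$, as a control on $\mathcal{A}_{\tau(\omega)}$; since $\tau \leq t$ one simply extends $\tilde u$ to start at $\tau$ (using the convention in the Remark after Lemma \ref{lemma:estaproxi} that the state is held constant on $[\tau, t]$ when no control is yet acting), and predictability is preserved under the shift.

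The main obstacle, and the step I expect to require the most care, is the rigorous construction of the regular family of shifted controls $\{\hat{u}^{\omega}\}$ together with the measurable selection and the uniformity of the null set — in other words, making precise the ``disintegration of a control along $\mathcal{F}_{\tau}$'' on the Wiener--Poisson space. This is exactly the content that \cite{Zal11} states without proof, and the honest way to discharge it is to first prove the identity for piecewise-constant (simple) controls, where the shift is explicit, then extend by density using the continuity estimate in part (2) of Lemma \ref{lemma:estaproxi} and the boundedness and joint continuity of $f$ and $h$ via dominated convergence. The stopping time $\tau$ being $(\mathcal{F}^{s}_{t})$-adapted rather than merely $(\mathcal{F}_{t})$-adapted is what guarantees the post-$\tau$ increments of both $W$ and $N$ are genuinely independent of $\mathcal{F}_{\tau}$ with the correct (time-shifted) law, so I would flag that hypothesis explicitly at the start of the proof.
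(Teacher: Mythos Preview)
Your sketch is correct. The paper does not give a detailed argument for this lemma---its proof consists solely of the sentence ``The proof follows from Remark 3.10 and the proof of Proposition 5.4 in \cite{BouT11}''---and the machinery you outline (flow property of the truncated SDE, independence of post-$\tau$ Wiener--Poisson increments from $\mathcal{F}_{\tau}$ on the canonical product space, disintegration/shift of the control along $\mathcal{F}_{\tau}$, concatenation for the second assertion, and passage from simple controls to general ones) is exactly the content of those references, so your approach coincides with the paper's by-citation proof.
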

\begin{proof}
   The proof follows from Remark 3.10 and the proof of Proposition 5.4 in \cite{BouT11}. \\
\end{proof}

\section{The Proof of DPP}
We now proceed to the prove of DPP. We will follow \cite{BouT11} and \cite{Zal11}.

\begin{proof}
   We start from the easy direction. For $\tau \in \mathcal{T}_{[s,T]}$, $u \in \mathcal{A}_{s}$, and by the first assertion of 
   \hyperref[lemma:eqn:markovstate]{Lemma \ref*{lemma:eqn:markovstate}},  we see that for $M \geq 1$ there exists a control $\hat{u} \in 
   \mathcal{A}_{\tau}$ such that
   \begin{eqnarray}
      &         &   \mathbb{E}\Big(\int_{s}^{T}f(t, X^{u,s,x,M}_{t},u_{t})dt + h(X^{u,s,x,M}_{T})\Big)       \nonumber \\             
      &    =    &   \mathbb{E}\Big(\int_{s}^{\tau}f(t,X^{u,s,x,M}_{t},u_{t})dt + V^{\hat{u},M}(\tau,X^{u,s,x,M}_{\tau})\Big) \nonumber \\
      &   \leq  &   \mathbb{E}\Big(\int_{s}^{\tau}f(t,X^{u,s,x}_{t},u_{t})1_{\{\tau_{M} > \tau\}}dt + C_{T}1_{\{\tau_{M} \leq \tau\}} 
                   + V^{M}(\tau,X^{u,s,x}_{\tau})1_{\{\tau_{M} > \tau\}}\Big). \nonumber
   \end{eqnarray}
   In the last line, we have used the fact that $1_{\{\tau_{M} > \tau\}} X^{u,s,x,M}_{t} =  1_{\{\tau_{M} > \tau\}} X^{u,s,x}_{t}$ 
   ($\mathbb{P}$-a.s.) for every $t \in [s, \tau]$. As $M \rightarrow \infty$, by boundedness of $f$, and $h$, we can apply the Dominated 
   Convergence Theorem. Thus, together with \hyperref[lemma:pwconvergence]{Lemma \ref*{lemma:pwconvergence}} and 
   \hyperref[lemma:convalM]{Lemma \ref*{lemma:convalM}}, we have
   \begin{eqnarray}
                    \mathbb{E}\Big(\int_{s}^{T}f(t, X^{u,s,x}_{t},u_{t})dt + h(X^{u,s,x}_{T})\Big)                     
      &   \leq  &   \mathbb{E}\Big(\int_{s}^{\tau}f(t,X^{u,s,x}_{t},u_{t})dt + V(\tau,X^{u,s,x}_{\tau})\Big). \nonumber 
   \end{eqnarray}
   Taking supremum over $\mathcal{A}_{s}$, we obtain  
   \begin{eqnarray}\label{eqn:valfunMdpp1}
        V(s,x)   &  \leq   &   \sup_{u \in \mathcal{A}_{s}}\mathbb{E}\Big(\int_{s}^{\tau}f(t,X^{u,s,x}_{t},u_{t})dt + V(\tau,X^{u,s,x}_{\tau})\Big). 
   \end{eqnarray}
    
   To show the converse, fix $\epsilon \in (0, 1)$ and choose $\alpha < \epsilon$. Next, choose $\beta > (\frac{1}{\epsilon})^{\frac{1}{p}}$ such that 
   $\rho(\alpha,\beta) < \epsilon$. For a fixed $p \geq 2$, let us take a Borel partition $\{B_{j}\}_{j \geq 1}$ of $\mathbb{R}^{d}$ such that 
   \begin{eqnarray}\label{eqn:assumpxdpps}
      \displaystyle\sup_{x_{j}, \hat{x}_{j} \in B_{j}}|x_{j}-\hat{x}_{j}|^{p} \leq \alpha^{p}\epsilon. 
   \end{eqnarray}
   For $M \geq 1$, $t \in [s,T]$ and $x \in \mathbb{R}^{d}$, we know that there exists an $\epsilon$-optimal control $\tilde{u}^{\epsilon,M} \in \mathcal{A}_{t}$ 
   such that   
   \begin{eqnarray}\label{eqn:VMep}
      V^{M}(t,x) \leq V^{M,\tilde{u}^{\epsilon,M}}(t,x) + \epsilon.
   \end{eqnarray}
   By \hyperref[corollary:unifvaluefunM]{Corollary \ref*{corollary:unifvaluefunM}}, (\ref{eqn:assumpxdpps})-(\ref{eqn:VMep}), and
   \hyperref[prop:vfljumpgen]{Proposition \ref*{prop:vfljumpgen}}, we see that for every $x_{j}, \hat{x}_{j} \in B_{j}$, there exists 
   an $\epsilon$-optimal control $\tilde{u}^{j,\epsilon,M} \in \mathcal{A}_{t}$ such that
   \begin{eqnarray}\label{eqn:3epsilon}
      V^{M}(t,x_{j})   &    \leq   &   V^{M}(t,\hat{x}_{j}) + \epsilon C_{T,p,M}(1+|x_{j}|^{p}) \nonumber \\
                       &    \leq   &   V^{M,\tilde{u}^{j,\epsilon,M}}(t,x_{j}) + \epsilon C_{T,p,M}(1+|x_{j}|^{p}) + \epsilon \nonumber \\
                       &    \leq   &   V^{M,\tilde{u}^{j,\epsilon,M}}(t,x_{j}) + \epsilon C_{T,p,M}(1+|x_{j}|^{p}).
   \end{eqnarray}
   For $u \in \mathcal{A}_{s}$, we take a sequence of controls
   \begin{eqnarray}
      \hat{u}^{j,\epsilon,M}_{r} = \left\{\begin{array}{ll}  u_{r},                          &   \textnormal{if} \ r \in [s,t],     \\ 
                                                             \tilde{u}^{j,\epsilon,M}_{r},   &   \textnormal{if} \ r \in (t,T] \ \textnormal{and} 
                                                                                           \ X^{u,s,x,M}_{t} \in B_{j},  \end{array}\right. \nonumber 
   \end{eqnarray}
   where $\tilde{u}^{j,\epsilon,M} \in \mathcal{A}_{t}$. It is easy to see that $\hat{u}^{j,\epsilon,M} \in \mathcal{A}_{s}$ which is a consequence 
   of the measurability of $X^{u,s,x,M}_{t}$ and the fact that $\mathcal{F}^{t}_{r} \subset \mathcal{F}^{s}_{r}$ for all $s \leq t \leq r$. By 
   uniqueness of solution, the second assertion of \hyperref[lemma:eqn:markovstate]{Lemma \ref*{lemma:eqn:markovstate}} and (\ref{eqn:3epsilon}) we 
   then obtain 
   \begin{eqnarray} 
                    V^{M}(s,x) 
      &  \geq  &    \mathbb{E}\Bigg(\int^{T}_{s}f(r,X^{\hat{u}^{j,\epsilon,M},s, x, M}_{r},u_{r})dr + h(X^{\hat{u}^{j,\epsilon,M},s, x, M}_{T}) \Bigg) \nonumber \\
      &   =    &   \mathbb{E}\Bigg(\int^{t}_{s}f(r,X^{u,s, x, M}_{r},u_{r})dr\Bigg) 
                  + \sum_{j \geq 1}\mathbb{E}\Bigg(\mathbb{E}\Big(\int^{T}_{t}f(r,X^{\tilde{u}^{j,\epsilon,M}, t, X^{u,s,x,M}_{t}}_{r},\tilde{u}^{j,\epsilon,M}_{r})dr \nonumber 
   \end{eqnarray}
   \begin{eqnarray}
      &        &  + h(X^{\tilde{u}^{\epsilon,j,M}, t, X^{u,s,x,M}_{t}}_{T}) \big| \mathcal{F}_{t} \Big)1_{\{X^{u,s,x,M}_{t} \in B_{j}\}}\Bigg) \nonumber \\
      &   =    &   \mathbb{E}\Bigg(\int^{t}_{s}f(r,X^{u,s, x, M}_{r},u_{r})dr\Bigg) 
                  + \sum_{j \geq 1}\mathbb{E}\Big(V^{u^{\epsilon,j,M},M}(t,X^{u,s,x,M}_{t})1_{\{X^{u,s,x,M}_{t} \in B_{j}\}}\Big) \nonumber  \\
      &  \geq  &    \mathbb{E}\Bigg(\int^{t}_{s}f(r,X^{u,s, x, M}_{r},u_{r})dr\Bigg) + \sum_{j \geq 1}\mathbb{E}\Bigg(\Big(V^{M}(t,X^{u,s,x,M}_{t}) \nonumber \\
      &        &   - \epsilon C_{T,p,M}(1+|X^{u,s,x,M}_{t}|^{p})\Big)1_{\{X^{u,s,x,M}_{t} \in B_{j}\}}\Bigg)  \nonumber \\ 
      &   =    &    \mathbb{E}\Bigg(\int^{t}_{s}f(r,X^{u,s, x, M}_{r},u_{r})dr + V^{M}(t,X^{u,s,x,M}_{t})\Bigg) 
                   - \epsilon C_{T,p,M}\mathbb{E}\Big(1+|X^{u,s,x,M}_{t}|^{p}\Big).  \nonumber
    \end{eqnarray}     
   Since $\epsilon$ is arbitrary,  we then have  
   \begin{eqnarray} 
        V^{M}(s,x)    &  \geq  &  \mathbb{E}\Big(\int^{t}_{s}f(r,X^{u,s, x, M}_{r},u_{r})dr + V^{M}(t,X^{u,s,x,M}_{t})\Big). \nonumber 
   \end{eqnarray}    
   Let $\mathcal{G}(t) :=  \int_{s}^{t}f(X^{u,s,x}_{r})dr + V^{M}(t,X^{u,s,x}_{t}$). For every $t_{1}, t_{2} \in [s,T)$, and 
   
   \begin{eqnarray}
        \hat{u}_{r} = \left\{\begin{array}{ll}  u_{r},             &   \textnormal{if} \ r \in [s,t_{1}],     \\ 
                                                \tilde{u}_{r},     &   \textnormal{if} \ r \in (t_{1},T],  \end{array}\right. \nonumber 
   \end{eqnarray}
   where $u \in \mathcal{A}_{s}$ and $\tilde{u} \in \mathcal{A}_{t_{1}}$, we have, by uniqueness of solution and the second assertion of 
   \hyperref[lemma:eqn:markovstate]{Lemma \ref*{lemma:eqn:markovstate}}, 
   \begin{eqnarray}
                     \mathbb{E}(\mathcal{G}(t_{2})|\mathcal{F}_{t_{1}})  
       &   =   &   \int_{s}^{t_{1}}f(t,X^{u,s,x,M}_{t},u_{t})dt  \nonumber \\
       &       &  + \mathbb{E}\Bigg(\int_{t_{1}}^{t_{2}}f(t,X^{u,s,X^{u,s,x,M}_{t_{1}},u_{t}}_{t},\tilde{u}_{t})dt 
                  + V^{M}(t_{2},X^{u,s,X^{u,s,x,M}_{t_{1}}}_{t_{2}})\big|\mathcal{F}_{t_{1}}\Bigg) \nonumber   \\    
       &  \leq &   \int_{s}^{t_{1}}f(t,X^{u,s,x,M}_{t},u_{t})dt + V^{M}(t_{1},X^{u,s,x,M}_{t_{1}})   \nonumber \\
       &   =   &   \mathcal{G}(t_{1}). \nonumber 
   \end{eqnarray}
   Thus, $\mathcal{G}$ is a supermartingale, and by Doob's Optional Sampling Theorem we know that, for every stopping time $\tau \in \mathcal{T}_{[s,T]}$ 
   and $u \in \mathcal{A}_{s}$, we have
   \begin{eqnarray}\label{eqn:valfunMdpp2}
     V^{M}(s,x)  &  \geq  & \mathbb{E}\Big(\int_{s}^{\tau}f(t,X^{u,s,x,M}_{t},u_{t})dt + V^{M}(\tau,X^{u,s,x,M}_{\tau})\Big). 
   \end{eqnarray}
   Without loss of generality, we assume that $f, h > 0$ for all $(s,x) \in [s,T]\times\mathbb{R}^{d}$. Then, (\ref{eqn:valfunMdpp2}) implies
   \begin{eqnarray}
     V^{M}(s,x) &  \geq  &  \mathbb{E}\Big(\int_{s}^{\tau}f(t,X^{u,s,x}_{t},u_{t})1_{\{\tau_{M} > \tau\}}dt 
                           + V^{M}(\tau,X^{u,s,x}_{\tau})1_{\{\tau_{M} > \tau\}} \Big). \nonumber 
   \end{eqnarray}
   Here, we use the fact that $1_{\{\tau_{M} > \tau\}} X^{u,s,x,M}_{t} =  1_{\{\tau_{M} > \tau\}} X^{u,s,x}_{t}$ ($\mathbb{P}$-a.s.) for every
   $t \in [s, \tau]$. As $M \rightarrow \infty$, thanks to the boundedness of $f$ and $h$, the Dominated Convergence Theorem can be applied. 
   Together with \hyperref[lemma:pwconvergence]{Lemma \ref*{lemma:pwconvergence}} and \hyperref[lemma:convalM]{Lemma \ref*{lemma:convalM}}, 
   the above yields 
   \begin{eqnarray}
      V(s,x) &  \geq  &  \mathbb{E}\Big(\int_{s}^{\tau}f(t,X^{u,s,x}_{t},u_{t})dt + V(\tau,X^{u,s,x}_{\tau})\Big). \nonumber 
   \end{eqnarray}    
   Taking supremum over $\mathcal{A}_{s}$, and combining with (\ref{eqn:valfunMdpp1}) we obtain the desired result. 
\end{proof}

%% The Appendices part is started with the command \appendix;
%% appendix sections are then done as normal sections
%% \appendix

%% \section{}
%% \label{}

% \newpage

% \section*{References}
% \nocite{*}
% % If you have bibdatabase file and want bibtex to generate the
% % bibitems, please use
% \footnotesize
% \bibliographystyle{elsarticle-num} 
% \bibliography{bibdpp}

%% else use the following coding to input the bibitems directly in the
%% TeX file.

% \begin{thebibliography}{00}
% 
% %% \bibitem[Author(year)]{label}
% %% Text of bibliographic item
% 
% \bibitem[ ()]{}
% 
% \end{thebibliography}

\end{document}